\def\blfootnote{\xdef\@thefnmark{}\@footnotetext}
\newtheorem{Theorem}{Theorem}[section]
\newtheorem{Claim}{Claim}[section]
\newtheorem{Fact}{Fact}[section]
\newtheorem{Definition}{Definition}[section]
\newtheorem{Question}{Question}[section]
\title{Borel Chain Conditions of Borel posets}
\author{Ming Xiao}
\begin{document}
\maketitle

\begin{abstract}
We study the coarse classification of partial orderings using chain conditions in the context of descriptive combinatorics. We show that (unlike the Borel counterpart of many other combinatorial notions), we have a strict hierarchy of different chain conditions, similar to the classical case. 
\end{abstract}

\section{introduction}


Let $X$ be a Polish space. A partial order $<$ over $X$ is said to be a Borel partial order if it is a Borel subset of $X^2$. This class of partial orders has been found to play a central role in the theory of forcing, and particularly in the theory of cardinal characteristics of continuum. The first systematical study of these posets is by Harrington, Marker and Shelah in \cite{HMS}, in which they observed a typical dichotomy:
\begin{Theorem}\cite{HMS}\label{HMS}
If $(X,<)$ is a Borel partial order, then either:
\begin{enumerate}
    \item it is a union of countably many Borel chains, or
    \item it includes a perfect pair-wise incomparable subset. 
\end{enumerate}
\end{Theorem}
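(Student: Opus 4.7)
By relativization I may assume $X = \omega^\omega$ and that $<$ is lightface $\Delta^1_1$, and equip $X$ with the Gandy--Harrington topology whose basic open sets are the (lightface) $\Sigma^1_1$ subsets. Recall that this topology is strong Choquet on every nonempty $\Sigma^1_1$ set, and that only countably many lightface $\Sigma^1_1$ subsets of $X$ exist.

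The plan is as follows. Let $A$ be the union of all $\Sigma^1_1$ chains of $(X,<)$. Since lightface $\Sigma^1_1$ indices are countable, $A$ is a countable union of $\Sigma^1_1$ chains; and since ``being a chain'' is a $\Pi^1_1$-on-$\Sigma^1_1$ property, Kreisel's $\Sigma^1_1$-reflection extends each $\Sigma^1_1$ chain to a Borel chain. Hence $A$ is a countable union of Borel chains, and if $A = X$ we are in alternative $(1)$. Otherwise $B := X \setminus A$ is a nonempty $\Pi^1_1$ set, and no $\Sigma^1_1$ set meeting $B$ can be a chain. I then construct a perfect pairwise-incomparable set inside $B$ by a Cantor scheme: build nonempty $\Sigma^1_1$ sets $U_s$ ($s \in 2^{<\omega}$), each meeting $B$, with $U_{s \frown i} \subseteq U_s$; with the Polish-closures of $U_{s \frown 0}, U_{s \frown 1}$ disjoint and of Polish-diameter tending to $0$; and with $U_{s \frown 0} \times U_{s \frown 1}$ contained in the incomparability relation $R = \{(x,y): x \neq y \wedge x \not< y \wedge y \not< x\}$. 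Strong Choquet together with shrinking diameters then yields, for each $\alpha \in 2^\omega$, a unique $x_\alpha \in \bigcap_n U_{\alpha \restriction n}$; the map $\alpha \mapsto x_\alpha$ will be a continuous injection, and the rectangle condition forces $R(x_\alpha, x_\beta)$ for all $\alpha \neq \beta$, yielding the desired perfect antichain.

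The crux is the splitting step of the Cantor scheme: given a nonempty $\Sigma^1_1$ set $U$ meeting $B$, produce nonempty $\Sigma^1_1$ subsets $U_0, U_1 \subseteq U$ with $U_0 \times U_1 \subseteq R$. Since $U$ is not a chain, one can pick incomparable $x_0, x_1 \in U \cap B$. My plan is to relativize the Gandy--Harrington topology to the parameters $x_0, x_1$ (so that $\{x_0\}, \{x_1\}$ become $\Sigma^1_1$) and apply a two-variable version of $\Sigma^1_1$-reflection to the $\Pi^1_1$-on-$\Sigma^1_1$ property ``$V_0 \times V_1 \subseteq R$'', which trivially holds for $V_0 = \{x_0\}, V_1 = \{x_1\}$. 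This yields Borel (hence $\Sigma^1_1$) $V_0' \ni x_0, V_1' \ni x_1$ with $V_0' \times V_1' \subseteq R$, and intersecting with $U$ gives the splitting. The remaining bookkeeping---arranging the $\Sigma^1_1$ sets to have disjoint Polish-closures and shrinking diameters at each level, which can be handled by refining with basic open sets of the Polish topology---is routine.
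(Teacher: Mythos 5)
Your global architecture is the standard one (the paper itself only cites \cite{HMS} and notes the result follows from the $G_0$-dichotomy via \cite{Mil12}, so there is no in-paper proof to match): reflect $\Sigma^1_1$ chains to Borel chains, let $A$ be the union of all of them, and if $B=X\setminus A\neq\emptyset$ run a Cantor scheme of $\Sigma^1_1$ sets in the Gandy--Harrington topology. Two repairable slips first. The set $A$ is in fact $\Pi^1_1$ (it is ``$\exists e\in\omega$ ($e$ is a $\Delta^1_1$ code of a chain and $x\in W_e$)''), so $B$ is $\Sigma^1_1$, not $\Pi^1_1$; this matters, because it is what lets you take $U_\emptyset=B$ and keep every $U_s\subseteq B$, whereas with your ``$\Sigma^1_1$ sets meeting $B$'' formulation the step ``pick incomparable $x_0,x_1\in U\cap B$'' is unjustified (you only know $U$ is not a chain, not that $U\cap B$ isn't). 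Also, ``strong Choquet together with shrinking Polish diameters'' does not by itself give a point of $\bigcap_n U_{\alpha| n}$: shrinking diameters only place the limit in the intersection of the Polish closures, and the incomparability relation $R$ is merely Borel, so closures are useless; one needs the usual fusion argument in which the scheme is built along plays of the strong Choquet game following a fixed winning strategy (or the lemma that the Gandy--Harrington topology on the low reals is Polish).

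The genuine gap is the splitting step, which is exactly the crux you name. Reflection applied to the $\Pi^1_1$-on-$\Sigma^1_1$ property ``$V_0\times V_1\subseteq R$'' starting from $\{x_0\}\times\{x_1\}$ only yields sets $V_0',V_1'$ that are Borel \emph{in the parameters} $x_0,x_1$ --- indeed it may return the singletons themselves --- and nothing makes them lightface. But the only leverage driving your recursion, ``no nonempty $\Sigma^1_1$ subset of $B$ is a chain,'' is parameter-free: relativized $\Sigma^1_1$ subsets of $B$ (singletons, for instance) can perfectly well be chains. So after one application of your step the hypothesis needed to split again is gone, and the scheme cannot be continued. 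More decisively: your splitting argument never uses that $R$ is the incomparability relation of a partial order (transitivity of $\leq$); it uses only ``every nonempty $\Sigma^1_1$ set inside $B$ contains an $R$-edge.'' If a rectangle-splitting step could be carried out from that hypothesis alone, the identical Cantor scheme applied to any Borel graph of uncountable Borel chromatic number would produce a perfect clique --- which is false for $G_0$ itself (it has uncountable Borel chromatic number, Fact~2.1/\cite{KST}, but is triangle-free). The missing content is precisely the order-theoretic argument of \cite{HMS} (equivalently, the extra work in \cite{Mil12} needed to pass from a homomorphism of $G_0$ into the incomparability graph to a perfect antichain): one must exploit transitivity, e.g.\ by showing via the forcing/genericity analysis that mutually Gandy--Harrington-generic points of $B$ are incomparable, and that is what legitimizes a parameter-free splitting or doubling step.
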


If we let $(X,E)$ be the incomparability graph of $\leq$ (i.e. $E=X^2\setminus (<\cup\geq)$), the above theorem can be restated as: either $(X,E)$ has countable Borel chromatic number or it includes a perfect complete graph. This statement is in the same spirit of the $G_0$-dichotomy of Kechris, Solecki and Todorcevic in theory of Borel chromatic number:

\begin{Theorem}\cite{KST}\label{KST}
There is a Borel graph $G_0$ on $2^\omega$ such that for every analytic graph $G$ on a Polish space $X$, exactly one of the following holds:
\begin{enumerate}
    \item $X$ has countable Borel chromatic number, or
    \item there is a continuous map from $2^\omega$ into $X$ preserving edges(i.e. its square sends $G_0$ into $G$). 
\end{enumerate}
\end{Theorem}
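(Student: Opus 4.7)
The plan is to first construct $G_0$ explicitly so that (1) fails for it (making (1) and (2) genuine alternatives), and then to show that whenever an analytic graph $G$ on $X$ fails (1), a continuous $(G_0,G)$-homomorphism $\varphi \colon 2^\omega \to X$ exists.

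For $G_0$, I would fix a sequence $(s_n)_{n \in \omega}$ with $s_n \in 2^n$ such that every $t \in 2^{<\omega}$ is an initial segment of some $s_n$, and let $G_0$ be the symmetric closure of
\[
\bigl\{(s_n{}^\frown 0{}^\frown z,\, s_n{}^\frown 1{}^\frown z) : n \in \omega,\, z \in 2^\omega\bigr\}.
\]
A Baire-category argument then shows that every $G_0$-independent set with the Baire property is meager: if $A$ were non-meager and $G_0$-independent, then $A$ would be comeager in some basic clopen $N_t$; picking $n$ with $t \sqsubseteq s_n$, $A$ would contain both $s_n{}^\frown 0{}^\frown z$ and $s_n{}^\frown 1{}^\frown z$ for comeagerly many $z$, contradicting independence. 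Hence $G_0$ has uncountable Borel chromatic number, and any continuous $(G_0,G)$-homomorphism pulls a countable Borel $G$-coloring back to a $G_0$-coloring, so (1) and (2) are mutually exclusive.

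For the main direction I would invoke effective descriptive set theory. Relativizing if necessary, assume $G$ is $\Sigma^1_1$ and does not satisfy (1), and let $A$ be the union of all $\Delta^1_1$ $G$-independent subsets of $X$; this union is itself $\Sigma^1_1$. If $A = X$, a standard uniformization would yield a countable $\Delta^1_1$ coloring, contradicting our assumption, so $B := X \setminus A$ is non-empty and, by construction, no non-empty $\Sigma^1_1$ subset of $B$ is $G$-independent. I would equip $B$ with the Gandy--Harrington topology generated by $\Sigma^1_1$ sets, under which every non-empty $\Sigma^1_1$ set is strong Choquet and $G$ is clopen in the product topology.

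The core step is a Cantor-scheme construction. By recursion on $|s|$, I would build non-empty $\Sigma^1_1$ sets $U_s \subseteq B$ for $s \in 2^{<\omega}$, decreasing with $s$, with diameters (in a compatible metric on $X$) tending to zero, and arranged so that for every $n$ and every $t \in 2^{<\omega}$,
\[
U_{s_n{}^\frown 0{}^\frown t} \times U_{s_n{}^\frown 1{}^\frown t} \subseteq G.
\]
Setting $\{\varphi(x)\} := \bigcap_n U_{x \restriction n}$ then produces the required continuous homomorphism. The main obstacle is the recursion step at index $s_n$: since $U_{s_n} \subseteq B$ is non-empty $\Sigma^1_1$, it is not $G$-independent, so $G \cap (U_{s_n} \times U_{s_n})$ contains some $(x,y)$; because $G$ is open in the product Gandy--Harrington topology, one can take $\Sigma^1_1$ neighborhoods $V_0 \ni x,\ V_1 \ni y$ with $V_0 \times V_1 \subseteq G$ and set $U_{s_n{}^\frown i} := V_i$. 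The delicate part is threading this splitting together with the diameter and strong Choquet conditions so that all product-in-$G$ constraints introduced at previous stages are simultaneously preserved throughout the recursion.
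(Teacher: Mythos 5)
First, note that the paper does not prove Theorem \ref{KST} at all; it is quoted from \cite{KST}, so your sketch can only be measured against the standard Kechris--Solecki--Todorcevic argument, whose overall architecture (definition of $G_0$ via a dense sequence $(s_n)$, Baire-category proof that $G_0$-independent sets with the Baire property are meager, and a Gandy--Harrington Cantor-scheme for the hard direction) you do reproduce. The first half of your sketch (construction of $G_0$, uncountability of $\chi_B(G_0)$, mutual exclusivity of (1) and (2)) is correct.

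However, the core recursion step contains a genuine error, not just a missing detail. You assert that $G$ is (cl)open in the \emph{product} of the Gandy--Harrington topologies, so that an edge $(x,y)\in G\cap(U_{s_n}\times U_{s_n})$ can be surrounded by $\Sigma^1_1$ rectangles $V_0\times V_1\subseteq G$. A $\Sigma^1_1$ subset of $X^2$ is open in the Gandy--Harrington topology \emph{of $X^2$}, but it need not be a union of $\Sigma^1_1$ rectangles: for any acyclic graph $G$ (and the dichotomy is applied to such graphs all the time --- $G_0$ itself is acyclic, as are graphs generated by Borel functions or free actions) a rectangle $V_0\times V_1\subseteq G$ with $|V_0|,|V_1|\geq 2$ would produce a $4$-cycle, so one of $V_0,V_1$ must be a singleton; a singleton is $G$-independent, so the requirement that every $U_s$ be a non-independent $\Sigma^1_1$ subset of $B$ immediately fails and the scheme stalls. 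The standard repair is exactly the ``delicate part'' you defer: besides the vertex sets $U_s$, one carries along, for each constraint $(s_n{}^\frown 0{}^\frown t,\ s_n{}^\frown 1{}^\frown t)$, a decreasing sequence of non-empty $\Sigma^1_1$ subsets of $G$ viewed inside $X^2$ (where $G$ \emph{is} Gandy--Harrington open), and uses strong Choquetness of the Gandy--Harrington topology on $X^2$ together with vanishing diameters in the Polish metric to force the limit pair of the Cantor scheme into $G$; without this extra bookkeeping the limit pairs need only lie in the closure of $G$. A secondary but consequential slip: the union $A$ of all $\Delta^1_1$ $G$-independent sets is $\boldsymbol{\Pi}^1_1$ (lightface $\Pi^1_1$), not $\Sigma^1_1$ --- this is precisely what makes $B=X\setminus A$ a $\Sigma^1_1$ set, which you need in order for $B$ to be a legitimate starting piece (a non-empty Gandy--Harrington open set) of the construction; as you state it, $B$ would only be $\Pi^1_1$ and the recursion could not even begin. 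One also needs the first reflection theorem to pass from ``no $\Delta^1_1$ independent set meets $B$'' to ``no non-empty $\Sigma^1_1$ subset of $B$ is independent,'' which your phrase ``by construction'' glosses over.
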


In fact, Theorem \ref{HMS} can be proved as a corollary of $G_0$-dichotomy (see, e.g., \cite{Mil12}). \\


The incomparability graph is not the only combinatorial notion that draws our attention. The main focus of this paper is the incompatibility graph, which, on the first sight, seems similar to the incomparability graph. However the phenomenon we are going to observe only belong to the incompatibility graph.

Our subject is based on the following notions:

\begin{Definition}
Let $P$ be a poset and $A\subset P$.
\begin{enumerate}
    \item Let $n>1$ be an integer. $A$ is $n$-linked if for every subset $A'\subset A$ of size $n$, there is $z\in P$ so that $z\leq x$ for all $x\in A'$.
    \item $A$ is linked if it is $2$-linked.
    \item $A$ is centred if it is $n$-linked for all $n>1$. 
    \item $x,y\in P$ are compatible if the set $\{x,y\}$ is linked.
    \item $x,y\in P$ are incompatible if they are not compatible.
    \item $A$ is an antichain if it is pairwise incompatible.
\end{enumerate} 

\end{Definition} 

the chain condition method is a way of classifying partial orders by looking at the certain combinatorial properties of compatibilities and incompatibilities. The importance of the chain conditions was first noticed in the characterization of topologies on linearly ordered sets, and was quickly applied in the measure theory and in the theory of forcing. 


Among the many chain conditions have been studied, here is a list of most:

\begin{Definition}
Let $P$ be a poset. 
\begin{enumerate}
    \item $P$ satisfies the $\sigma$-finite chain condition if there is a countable partition $P=\bigcup_n P_n$ so that each $P_n$ includes no infinite antichain.
    \item $P$ satisfies the $\sigma$-bounded chain condition if there is a countable partition $P=\bigcup_n P_n$ so that each $P_n$ includes no antichains of size $\geq n$.
    \item $P$ is $\sigma$-$n$-linked if there is a countable partition $P=\bigcup_k P_k$ so that each $P_k$ is $n$-linked.
    \item $P$ is $\sigma$-centred if there is a countable partition $P=\bigcup_n P_n$ so that each $P_n$ is centred. 
\end{enumerate}
\end{Definition}

While these conditions are obviously listed from weaker to stronger, the fact that their strength is strictly increasing is non-trivial--especially for $\sigma$-finite chain condition and $\sigma$-bounded chain condition which were first studied and conjectured to be different in \cite{horntarski}, and whose strength was just differentiated during the last decade in \cite{thummel}(also see \cite{to2014} for a Borel solution). 

In this work, we study these chain conditions on Borel partial orders defined on Polish spaces and restrict ourselves to only Borel witnesses. Namely, we study following list of properties:

\begin{Definition}
Let $(P,\leq)$ be a Borel poset(i.e. $P$ is a Polish space or a standard Borel space, the partial order $\leq$ is a Borel subset of $P^2$).
\begin{enumerate}
    \item $P$ satisfies Borel $\sigma$-finite chain condition if there is a countable partition $P=\bigcup_n P_n$ so that each $P_n$ is Borel and includes no infinite antichain.
    \item $P$ satisfies Borel $\sigma$-bounded chain condition if there is a countable partition $P=\bigcup_n P_n$ so that each $P_n$ is Borel and includes no antichains of size $\geq n$.
    \item $P$ is Borel $\sigma$-$n$-linked if there is a countable partition $P=\bigcup_n P_n$ so that each $P_n$ is Borel and $n$-linked.
    \item $P$ is Borel $\sigma$-centred if there is a countable partition $P=\bigcup_n P_n$ so that each $P_n$ is Borel and centred. 
\end{enumerate}
\end{Definition}

Our main theorem states that this hierarchy is indeed a non-trivial one:

\begin{Theorem}\label{main}
All properties listed above are distinct. 
\end{Theorem}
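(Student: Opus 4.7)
The plan is to prove Theorem~\ref{main} by exhibiting, for each consecutive strict inclusion in the hierarchy, a specific Borel poset witnessing the separation. The four separations to establish are: (i) Borel $\sigma$-finite cc from Borel $\sigma$-bounded cc; (ii) Borel $\sigma$-bounded cc from Borel $\sigma$-linked; (iii) Borel $\sigma$-$n$-linked from Borel $\sigma$-$(n+1)$-linked, for each $n \geq 2$; and (iv) ``Borel $\sigma$-$n$-linked for every $n$'' from Borel $\sigma$-centred.

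The unifying tool for the negative direction is the \emph{$n$-incompatibility hypergraph} $H_n(P) := \{A \in [P]^n : A \text{ has no common lower bound in } P\}$. A Borel poset $P$ is Borel $\sigma$-$n$-linked if and only if $H_n(P)$ admits a countable Borel coloring, so ruling out Borel $\sigma$-$n$-linkedness reduces to lower-bounding the Borel chromatic number of $H_n(P)$. For this I would invoke a hypergraph analogue of Theorem~\ref{KST}: every analytic $n$-uniform hypergraph either has countable Borel chromatic number or Borel-contains a canonical hypergraph $\mathbb{G}_0^{(n)}$ of uncountable Borel chromatic number.

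Step (i) is handled by the Borel version of Th\"ummel's construction from \cite{to2014}. For (ii), I would build a Borel poset whose incompatibility graph Borel-embeds $G_0$, forbidding any Borel $\sigma$-linked partition by Theorem~\ref{KST}, but that admits a stratification with slowly growing finite clique bounds witnessing $\sigma$-bounded cc. Step (iii) is the technical core: for each $n$, I would construct a Borel poset $Q_n$ on a Polish space---natural candidates being a Borel restriction of $[2^\omega]^{\leq n}$ ordered by reverse inclusion, or a shift-type poset on an appropriate tree---whose $n$-incompatibility hypergraph admits a concrete countable Borel coloring while its $(n+1)$-incompatibility hypergraph Borel-contains $\mathbb{G}_0^{(n+1)}$. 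Step (iv) would then be addressed by a single Borel poset $R$, obtained as a Borel amalgam or Fubini-type product of the $Q_n$, so that for each fixed $n$ one exhibits a Borel $\sigma$-$n$-linked partition of $R$ using only finitely many coordinates of the amalgamation, while a diagonal argument combined with the lower bound from (iii) rules out any single Borel $\sigma$-centred partition.

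The main obstacle will be step (iii) together with the diagonal combination in (iv). The $\sigma$-$n$-linked versus $\sigma$-$(n+1)$-linked gap is delicate: the posets $Q_n$ must be tuned so that their $n$-incompatibility hypergraphs fall just on the countable-coloring side of the hypergraph dichotomy while their $(n+1)$-incompatibility hypergraphs fall just beyond it. Proving this uniformly in $n$, and then amalgamating into a single $R$ for (iv) without accidentally enabling a centred Borel partition, is where the bulk of the technical work should lie; the remaining separations (i) and (ii) will largely follow from adapting existing constructions.
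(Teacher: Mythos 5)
Your proposal is a strategy outline rather than a proof: the two places you yourself identify as ``the technical core'' --- step (iii) and the amalgamation in (iv) --- are exactly the places where no construction is actually given, and those are the steps that carry the content of Theorem~\ref{main}. Naming candidate posets (``a Borel restriction of $[2^\omega]^{\leq n}$ ordered by reverse inclusion, or a shift-type poset'') and asking that their $n$-incompatibility hypergraphs land ``just on the countable-coloring side'' of a hypergraph dichotomy is not yet an argument; for a specific poset the dichotomy does not hand you uncountable Borel chromatic number, you still have to either verify a Borel containment of $\mathbb{G}_0^{(n+1)}$ or prove the lower bound directly, and you give no mechanism for the positive direction (an explicit countable Borel partition into $n$-linked pieces) either. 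The missing idea that makes all four separations uniform in the paper is to pass to the poset $P(X)$ of \emph{finite anti-cliques} of a canonical $G_0$-like hypergraph $X$ (the hypergraphs $([T_n],H_n)$, $([T_\infty],H^0_\infty)$, $([T_\infty],H^1_\infty)$ built over a dense set of splitting nodes). In $P(X)$ the singletons reproduce the hypergraph: e.g.\ in $P([T_n],H_n)$ a set of singletons is centred iff it is an anti-clique, and in $P([T_\infty],H^0_\infty)$ it is $n$-linked iff it avoids edges of size $\leq n$. Hence any Borel witness for the stronger chain condition would yield a countable Borel coloring of the hypergraph, which is refuted by a short Baire-category argument (Fact~\ref{uncblechi}) --- no dichotomy theorem is needed. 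The positive directions are then obtained by an explicit partition indexed by finite sets of nodes recording the first splitting levels of the conditions, with pigeonhole (for $\sigma$-$(n-1)$-linkedness) or Ramsey-theoretic arguments (for the bounded and finite chain conditions) verifying the linkedness or antichain bounds.

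Two further concrete problems. For step (iv), your amalgam idea is unsubstantiated and the naive versions fail: a disjoint union of posets $Q_m$ that are $\sigma$-$m$-linked but not $\sigma$-$(m+1)$-linked is already not $\sigma$-$3$-linked (it contains $Q_2$), and a Fubini-type product needs a precise definition before one can check that incompatibility behaves as you want; the paper instead uses a \emph{single} hypergraph $H^0_\infty$ with edges of unbounded finite arity, so that for each fixed $n$ the splitting-level partition (refined so that $|t_i|>n$) witnesses Borel $\sigma$-$n$-linkedness, while Borel $\sigma$-centredness fails because centred sets of singletons are anti-cliques. For step (i), deferring to the Borel Horn--Tarski example of \cite{to2014} requires you to check that the $\sigma$-finite-cc witness there consists of Borel pieces (the Borel version of the chain condition is what is being separated), and in any case this imports a substantially harder construction than the paper's $P([T_\infty],H^1_\infty)$, whose failure of Borel $\sigma$-bounded cc follows from the ``arbitrarily large complete subgraphs in every Borel piece'' clause of Fact~\ref{uncblechi} and whose Borel $\sigma$-finite cc is proved by the same splitting-node partition plus a Ramsey argument showing $U_p$ has no infinite antichain. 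As written, your proposal does not establish any of the four separations.
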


As mentioned above, such a non-trivial hierarchical strucutre would not occur in the theory of  incomparabilities of a Borel posets. On the other hand, these Borel chain conditions is also significantly different from the classical ones. As we will see, all examples that differentiates this hierarchy can be taken to be $\sigma$-centred.

We follow standard notations in descriptive set theory. See, e.g., \cite{Ke}. 



\section{preparation}

For a set $X$, a (symmetric) hypergraph over $X$ is a pair $(X,H)$ where $H$ (called the set of edges) is a subset of $[X]^{<\omega}\setminus X$. If all edges are of a same finite size $d$, we say $X$ is a $d$-dimensional hypergraph. We will write the pair $(X,H)$ as $X$ when there is no confusion which hypergraph structure we are talking about. For a hypergraph $(X,H)$, a subset $A\subset X$ is called an anti-clique if there is no subset $A'\subset A$ satisfies $A'\in H$. $(X,H)$ is called a Borel hypergraph if $X$ is a Polish space and $H$ a Borel subset of $[X]^{<\omega}$ equipped with the usual product topology. The Borel chromatic number $\chi_B(X,H)$ is the smallest cardinality of a Polish space to which there is a Borel map being non-constant on every edge.

We are going to heavily use the concepts related to trees. In this work, a (order theoretical) tree $T$ is always a subset of $\omega^{<\omega}$. Given a tree $T$, we denote by $[T]$ the set of all its infinite branches. 

\begin{Definition}
Let $X$ be a hypergraph. Denote as $P(X)$ the poset of all finite anti-cliques of $X$, ordered by reverse inclusion. 
\end{Definition}

Note that when $(X,H)$ is a Borel hypergraph, $P(X)$ is a Borel poset. 

The hypergraphs we are going to use are defined on the set of branches of several trees. Let $T_n$ be the tree $n^{<\omega}$ for each $n$ and $T_{\infty}=\bigcup_{n<\infty} n^n$. For each $T_n$ and $T_{\infty}$, fix a subset $D_n\subset T_n$, $D_{\infty}\subset T_{\infty}$, respectively, so that each of them is dense and intersects each level with exactly one node.

For each $n$, define $H_n=\{\{d\frown i\frown x:0\leq i<n\}:d\in D_n, x\in [T_n]\}$ to make each $[T_n]$ a $n$-dimensional hypergraph. In the same spirit, let $H^0_{\infty}=\{\{d\frown i\frown x:0\leq i<|d|\}:d\in D_{\infty}, x\in [\bigcup_{n>|d|+1} n^n]\}$ and $H^1_{\infty}=\{\{d\frown i\frown x, d\frown j\frown x\}:0\leq i\neq j<|d|, d\in D_{\infty}, x\in [\bigcup_{n>|d|+1} n^n]\}$. 

These hypergraphs naturally generalize graphs $G_0$ defined in \cite{KST} and are well-studied in descriptive combinatorics (see, for example, \cite{Lecomte09}). One of important properties is that they have uncountable Borel chromatic number:
\begin{Fact}\label{uncblechi}
The hypergraphs $([T_n], H_n)$, $([T_{\infty}], H^0_{\infty})$ and $([T_{\infty}], H^1_{\infty})$ are all of uncountable Borel chromatic number. Moreover, for every countable partition of $([T_{\infty}], H^1_{\infty})$ into Borel subsets, one fragment includes complete subgraphs of arbitrarily large sizes. 
\end{Fact}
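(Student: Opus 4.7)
The plan is to follow the classical Kechris-Solecki-Todorcevic template. For each of the three hypergraphs I would show that every Borel anti-clique is meager; countable partitions into anti-cliques are then ruled out by Baire category. The two ingredients are the Baire property of Borel sets and the fact that $D_n$ and $D_\infty$ hit every level of their respective trees.

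For $([T_n],H_n)$: let $A\subseteq[T_n]$ be a Borel anti-clique and suppose $A$ is non-meager. By the Baire property there is a basic open $N_s$ on which $A$ is comeager, and by density of $D_n$ there is some $d\in D_n$ extending $s$, so $A$ is still comeager on $N_d$. For each $i<n$ the shift $\phi_i(x)=d\frown i\frown x$ is a homeomorphism of $[T_n]$ onto the clopen set $N_{d\frown i}\subseteq N_d$, which transports comeagerness; hence each $\phi_i^{-1}(A)$ is comeager in $[T_n]$. Their finite intersection is then comeager, and any $x$ in it witnesses the $H_n$-edge $\{d\frown i\frown x:i<n\}\subseteq A$, contradicting anti-cliqueness. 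The same argument, with $d\in D_\infty$ of length at least $2$, handles $([T_\infty],H^0_\infty)$ and the chromatic-number claim for $([T_\infty],H^1_\infty)$ (for the latter only two shifts $\phi_i,\phi_j$ are needed to produce a single edge).

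For the \emph{moreover} part, fix a countable Borel partition $[T_\infty]=\bigsqcup_m A_m$. Since $[T_\infty]$ is a Baire space, at least one piece $A_m$ is non-meager, hence comeager on some basic open $N_s$. Given any target $k$, density of $D_\infty$ at depths $\geq k$ yields $d\in D_\infty$ with $d\supseteq s$ and $|d|\geq k$, and $A_m$ is still comeager on $N_d$. Running the shift argument with all $|d|$ shifts $\phi_i$ ($i<|d|$) simultaneously produces $x$ with $d\frown i\frown x\in A_m$ for every $i<|d|$; since every pair $\{d\frown i\frown x,d\frown j\frown x\}$ with $i\neq j<|d|$ is an $H^1_\infty$-edge, this is a clique of size $|d|\geq k$ sitting inside the single piece $A_m$. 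Varying $k$ while keeping $m$ fixed then produces complete subgraphs of arbitrarily large size all in $A_m$.

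The main technical point is the bookkeeping for $T_\infty$, whose branching degree varies with depth: one must verify that for admissible $i<|d|$ the shift $\phi_i$ really is a homeomorphism from the appropriate tail branch space onto $N_{d\frown i}\subseteq[T_\infty]$, so that Baire category transports cleanly between source and target. Once this is set up, the proof is exactly the KST Baire-category argument applied to each hypergraph in turn.
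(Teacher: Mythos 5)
Your proposal is correct and is essentially the paper's own argument: the standard Kechris--Solecki--Todorcevic Baire-category proof, localizing a non-meager Borel color class on a basic open set below a node of $D_n$ (resp.\ $D_\infty$) and pulling back along the shift homeomorphisms $x\mapsto d\frown i\frown x$ to find a monochromatic edge (or, for the moreover part, an arbitrarily large clique below a long $d\in D_\infty$). The paper only writes this out for $[T_n]$ and says the other cases follow by the same method, so your more explicit treatment of the $T_\infty$ cases fills in exactly the intended details.
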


Here we go through a standard argument using the property of Baire for the case $[T_n]$, other cases follow from the same method. 

\begin{proof}

Suppose not. Then there is a Borel map $f:[T_n]\to \mathbb{N}$ that is non-constant on every edge. As $[T_n]$ is a Polish space, there must be an integer $k$ and a node $t\in D_n$ so that $f^{-1}(k)$ is comeager in the basic open set $\{t\frown x:x\in [T_n]\}$. Therefore, for each $0\leq i<k$, $f^{-1}(n)$ is comeager in $\{t\frown i\frown x:x\in [T_n]\}$. Let $U_i=\{x:t\frown i\frown x\in f^{-1}(k)\}$. Each $U_i$ is comeager in $[T_n]$ thus they have a non-empty intersection. Take an $x$ from this intersection, $\{t\frown i\frown x:0\leq i<n\}$ form an edge on which $f$ is constant, a contradiction.

\end{proof}



\section{proof of the theorem}

The purpose of this section is to show the following facts:
\begin{enumerate}
    \item $P([T_{\infty}],H^1_{\infty})$ is Borel $\sigma$-finite-c.c., but not Borel $\sigma$-bounded c.c..
    \item $P([T_2], H_2)$ is Borel $\sigma$-bounded c.c., but not Borel $\sigma$-linked.
    \item For every $n>2$, $P(([T_n],H_n))$ is Borel $\sigma$-$(n-1)$-linked, but not Borel $\sigma$-$n$-linked.
    \item $P([T_{\infty}],H^0_{\infty})$ is Borel $\sigma$-$n$-linked for every $n>2$, but not Borel $\sigma$-centred.
\end{enumerate}
which clearly together imply theorem \ref{main}. \\

First we show the ``not" part:

\begin{proof}

For each hypergraph $[T]$ mentioned, we can naturally identify $[T]$ with the subset of $P([T])$ consisting with all singletons. For $T$ being $2$-dimension ($[T_2]$ and $([T_{\infty}], H^1_{\infty})$), every complete subgraph is an antichain. In $[T_n]$, a subset is centred if and only if it is an anti-clique. In $([T_{\infty}],H^0_{\infty})$, a subset is $n$-linked if and only if it does not include any edge of size $\leq n$. Then the ``not" part follows from fact \ref{uncblechi}.

\end{proof}

Firstly, we describe a construction of Borel partitions for $P(([T_n],H_n))$ witnessing Borel $\sigma$-$(n-1)$-linkedness, for $n>2$. For the rest three posets, the partition would be the same but we need to reasoning slight differently to see why they work. 

\begin{proof}

For each hypergraph $X$, let $P_k(X)=\{p\in P(X):|p|=k+1\}$. Clearly each $P_k(X)$ is Borel if $X$ is, and $P(X)=\bigcup P_n(X)\cup \{\emptyset\}$.

\begin{Claim}\label{Claim}

For each hypergraph $[T_n]$, for each $\{x_0,...,x_k\}\in P_k([T_n])$, there are $k+1$ distinct nodes $\{t_0,...,t_k\}\subset T_n$ of the same height such that $t_i\sqsubset x_i$ and for every tuple $\{y_0,...,y_k\}\subset [T_n]$ satisfying $t_i\sqsubset y_i$ for every $0\leq i\leq k$, $\{y_0,...,y_k\}$ is an anticlique. 

\end{Claim}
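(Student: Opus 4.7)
The plan is to pick a sufficiently large level $L$ in $T_n$ and set $t_i = x_i\restriction L$, then verify that no $n$-element subset of $\{t_0,\ldots,t_k\}$ can be extended to an $H_n$-edge. Recall that an edge of $H_n$ has the form $\{d\frown j\frown x:0\le j<n\}$ with $d\in D_n$ and $x\in[T_n]$; accordingly, an $n$-subset of truncations at level $L$ is extendible to an edge iff it has the form $\{d\frown j\frown s:0\le j<n\}$ for some $d\in D_n$ with $|d|<L$ and some common $s$ of length $L-|d|-1$. Call such a configuration a \emph{potential edge at level $L$}. The goal is thus to choose $L$ so that (i) the $t_i$ are pairwise distinct and (ii) no $n$-subset of $\{t_0,\ldots,t_k\}$ is a potential edge.

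Fix an $n$-subset $I=\{x_{j_1},\ldots,x_{j_n}\}$ and let $m_I$ be the length of its longest common prefix; since the $x_{j_l}$ are distinct branches, $m_I<\infty$. For $L>m_I$ the truncations $t_{j_l}$ share exactly this prefix. The parameter $d$ of any potential edge among them must be a common prefix of all the $t_{j_l}$, hence $|d|\le m_I$; but it must also admit $n$ distinct continuations at position $|d|$, which forces $|d|=m_I$ and $d=x_{j_1}\restriction m_I$. When this prefix does not lie in $D_n$, there is no potential edge at any level $L>m_I$, and we are done for this $I$. When it does lie in $D_n$, the hypothesis that $I$ is not an $H_n$-edge tells us that the $x_{j_l}$ fail to share a common tail past position $m_I$: there is a smallest $m'_I>m_I$ at which $x_{j_l}(m'_I)$ are not all equal. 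For any $L>m'_I$ the truncations $t_{j_l}$ still disagree at the position $m'_I\in[m_I+1,L-1]$, which violates the agreement required for a potential edge with parameter $d$ of length $m_I$.

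Choosing $L$ to exceed $m_I$ and (when relevant) $m'_I$ for each of the finitely many $n$-subsets $I\subseteq\{x_0,\ldots,x_k\}$, and also large enough that the $x_i$ become pairwise distinguished by the $L$-th level, produces nodes $t_i=x_i\restriction L$ as required. Any branches $y_i\sqsupset t_i$ then form an anti-clique, because each of their $n$-tuples lies above an $n$-tuple of truncations that is not a potential edge.

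The conceptual crux is the observation that, among the truncations of the $x_{j_l}$, the only candidate parameter $d$ for a potential edge is forced to be the actual longest common prefix $x_{j_1}\restriction m_I$ itself; this rigidity reduces the question to whether this prefix lies in $D_n$ and, when it does, to locating the first later coordinate at which the branches disagree — information that the non-edge hypothesis hands us for free. Once this reduction is made, the remainder is finite bookkeeping over the $n$-subsets of $\{x_0,\ldots,x_k\}$, and I do not expect any substantive obstacle.
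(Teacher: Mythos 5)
Your argument is correct, and at heart it uses the same truncation strategy as the paper's proof: cut the branches at a level high enough that whatever prevents an edge is already visible below the cut. But you organize it differently, and the difference has real content. The paper runs a single global trichotomy on the whole tuple $\{x_0,\dots,x_k\}$ (some pairwise $\Delta\notin D_n$; two unequal pairwise $\Delta$'s; all pairwise $\Delta$'s equal to one $d\in D_n$ together with a later disagreement) and picks one level $l$ from whichever case occurs, asserting that any tuple above the truncations realizes the same case and is therefore an anti-clique. That inference is only sound when $k+1\le n$: for $k+1>n$, falling into the first two cases does not preclude an $n$-element subset whose common prefix lies in $D_n$ and whose members take all $n$ successor values there but split again later, and the level $l=\sup\{|\Delta(x_i,x_j)|+1\}$ chosen in those cases does not reach that later splitting point, so extensions above such truncations can recreate an edge. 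Your proof localizes the analysis to each $n$-subset $I$: the rigidity observation that the only candidate stem is the longest common prefix of $I$, together with the choice $L>m'_I$ past the first later disagreement when that prefix is dense (your derivation of $m'_I$ from the non-edge hypothesis is valid, since agreement at every position beyond the prefix would force $I$ to be an edge), blocks every dangerous subset, and taking $L$ beyond the finitely many thresholds and all pairwise splitting levels gives the claim for every $k$. In short, your route costs some extra bookkeeping over $n$-subsets but is airtight for anti-cliques of size greater than $n$, exactly where the paper's global case analysis, read literally, is loose; the paper's version is shorter and works cleanly when the anti-clique has at most $n$ elements.
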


\begin{proof}
Fix $[T_n]$. For every two branches $x,y$ in a tree, denote by $\Delta(x,y)$ the longest initial segment of $x$ and $y$. Fix $\{x_0,...,x_k\}\in P_k([T_n])$. For it to be an anti-clique, it must fall into one of three cases: 

\begin{enumerate}
	\item there are $i\neq j$ so that $\Delta(x_i,x_j)\notin D_n$, or
	\item there are $i\neq j\neq k$ so that $\Delta(x_i, x_j)\neq \Delta(x_j, x_k)$, or
	\item there is a $d\in D_n$ so that for every $i\neq j$ we have $\Delta(x_i,x_j)=d$ but there are $i\neq j$ and $l_{ij}>|d|$ so that $x_i(l_{ij})\neq x_j(l_{ij})$. 
\end{enumerate}  

In first two cases, let $l=sup\{|\Delta(x_i,x_j)|+1\}_{0\leq i\neq j\leq k}$. If the third case happens, pick such $i\neq j$ and let $l=l_{ij}+1$. Let $t_i=x_i|l$ (the initial segment of $x_i$ of length $l$). Then each tuple in the open set $\{\{y_i\}_{0\leq i\leq k}:t_i\sqsubset y_i\}$ realizes the same case as $\{x_0,...,x_k\}$ below level $l$, and thus is an anti-clique. 

\end{proof}

Now for each $p\in P_k([T_n])$ we pick $S_p=\{t_0,...,t_k\}$ and let $U_p$ be the open neighbourhood of $P_k([T_n])$ defined by $U_p=\{\{y_i\}_{0\leq i<k}:t_i\sqsubset y_i\}$. Clearly, $U_p$ is Borel (in fact it is open). We show that it is $n-1$-linked. Let $A=\{p_0,...,p_{n-2}\}\subset U_p$ be a subset of size $n-1$. We show that it is centred (i.e. their union is still an anti-clique): 

if not, take $(x_0,...,x_{n-1})$ being an edge in $\bigcup_{i<l} p_i$. By above claim, there are is $i\leq k$, and there are $x_{j_0}$ and $x_{j_1}$ both extending the $t_i$, thus $|\Delta(x_{j_0},x_{j_1})|\geq t_i$. On the other hand, by pigeon hole principal (and the fact that $n>n-1$), there has to be $m<n_1$, $x_{l_0}\neq x_{l_1}$ both in $p_m$. By our definition of $U_p$, $|\Delta(x_{l_0},x_{l_1})|<|t_i|$. Thus we have $\Delta(x_{j_0},x_{j_1})\neq \Delta(x_{l_0},x_{l_1})$. 

However, by the definition of $H_n$, $\Delta (x_{i_0},x_{i_1})=\Delta(x_{j_0},x_{j_1})$ for every pairs $i_0\neq i_1$ and $j_0\neq j_1$. This contradiction shows that there cannot be any edge in  $\bigcup_{0\leq i<n-1} p_i$. 

Lastly, notice that while there are uncountably many $p$, there can only be countably many $S_p$ since they are finite subsets of the countable set $T_n$. Also, it is clear that $p\in U_p$, thus $P([T_n])=\bigcup_{p\in P} U_p$ is actually a countable partition of $P[T_n]$ into countably many $n-1$-linked Borel subsets, as wanted. 

\end{proof}

Now we turn to the case $P([T_2],H_2)$. The Claim from above still works for $n=2$ so we can still construct $U_p$. For this case, we want to show that each $U_p$ only includes antichains of bounded size. Fix $p\in P([T_2],H_2)$, pick $S_p$ as in the above proof and let $A\subset U_p$ be an antichain. Order $S_p=\{t_0,...,t_k\}$.  For each pair $q_0\neq q_1\in A$, as they are incompatible, there has two be an $H_2$ edge connecting $x\in q_0, y\in q_1$. By our claim, there has to be $0\leq i\leq k$ so that $x$ and $y$ both end-extends $t_i$. We color this (unordered) pair $q_0,q_1$ with the least such $i$. $A$ is an antichain, so $[A]^2$ is fully colored. By the Ramsey theorem, when $|A|$ is large enough (more precisely, when it is no less than the $|p|$-color Ramsey number $R(3,3,...,3)$), there are $t\in S_p$, $p_0\neq p_1\neq p_2\in A$ and $x_i\in p_i$ so that $t\sqsubset x_i$ for $i=0,1,2$ and $x_0,x_1,x_2\in [T_2]$ form a triangle($K_3$). However, this is impossible: it is a well-known fact that $([T_2],H_2)$ (which is just $G_0$) is loop-free. 

Thus $U_p$ does not include any antichain of size larger than the $|p|$-color Ramsey number $R(3,3,..3)$. This number clearly only depends on the size of $p$ and is independent of our choice of $S_p$. Again, there are only countably many different possible $S_p$ so $P([T_2],H_2)=\bigcup_{p\in P} U_p$ is $\sigma$-bounded c.c.

For $P([T_{\infty}],H^0_{\infty})$, we need to construct a partition witnessing Borel $\sigma$-$n$-linkedness for each $n$. For this purpose, we turn back to the Claim \ref{Claim}. In addition to the requirements in the Claim, we also require the $|t_i|>n$. This can be achieved simply by pick $l=n+1$ if the original $l\leq n$(otherwise we can just remain it unchanged). Once this is done, the same proof of Borel $\sigma$-$n$-linkedness works for $P([T_{\infty}],H^0_{\infty})$.

Lastly, for $P([T_{\infty}],H^1_{\infty})$, we show that for every $p$ and for any $S_p$ as in the Claim, $U_p$ does not include infinite anti-chains. The proof goes exactly the same as case $P([T_2],H_2)$, only slightly differs at the use of Ramsey theorem: Instead of $K_3$, this time we use Ramsey theorem to pick an infinite complete subgraph $G$ from $([T_{\infty}],H^1_{\infty})$. We now show that there cannot be any infinite complete subgraph. Pick $x\in G$. By the definition of $H^1_{\infty}$, for each $d\in D_{\infty}$ there are only finitely many $x'\in [T_{\infty}]$ satisfying $x'\in G$ and $\Delta(x,x')=d$. Therefore there has to be $y\neq z$ so that $\Delta(x,y)=d_0\sqsubset \Delta(x,z)=d_1$ for $d_0\neq d_1\in D_{\infty}$. In this case, we can see that $\Delta(y,z)=d_0$ as well. Since $y,z\in G$, there has to be integers $i\neq j$ and real $r$ so that $y=d_0\frown i\frown r$ and $z=d_0\frown j\frown r$. However, this implies that $d_0\frown j\sqsubset d_1\sqsubset x$. By looking at the definition of $H^1_{\infty}$ again, we notice that an edge from $x$ to $y$ makes $x=d_0\frown j\frown r=z$, contradicting our choice of $x,y,z$ to be distinct.

\section{Comparison with classical cases}

It worth notice that every poset we mentioned above are all $\sigma$-centred if we do not require the fragmentation to be Borel.

\begin{Theorem}
If $H$ is a hypergraph with at most continuumly many connected components and each connected component is countable, then $P(H)$ is $\sigma$-centred.
\end{Theorem}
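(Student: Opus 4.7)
The plan is to reduce $\sigma$-centredness of $P(H)$ to a separability statement about a product space and then invoke the Hewitt--Marczewski--Pondiczery theorem on separability of products.

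The first step is a reformulation: $P(H)$ is $\sigma$-centred if and only if there is a countable family $\{A_n : n \in \omega\}$ of (not necessarily finite) anti-cliques of $H$ such that every finite anti-clique $p \in P(H)$ is contained in some $A_n$. In one direction, each set $F_n = \{p \in P(H) : p \subseteq A_n\}$ is a filter in $P(H)$ and hence centred. For the other direction, if $F \subseteq P(H)$ is centred, then $\bigcup F$ is itself an anti-clique of $H$: being an anti-clique is a local property, so any hyperedge contained in $\bigcup F$ would already be contained in the union of finitely many members of $F$, contradicting the fact that those members admit a common lower bound in $P(H)$. So the centred pieces of any partition witness the existence of such a covering family, and conversely.

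Next, enumerate the connected components as $\{C_\alpha : \alpha < \kappa\}$ with $\kappa \leq 2^{\aleph_0}$, and let $Y_\alpha$ be the set of all anti-cliques of $C_\alpha$, viewed as a subspace of $2^{C_\alpha}$. Since being an anti-clique is defined by avoiding a family of finite hyperedges, $Y_\alpha$ is closed in $2^{C_\alpha}$, and because $C_\alpha$ is countable, $Y_\alpha$ is a separable metric space. Since every hyperedge of $H$ lies inside a single component, anti-cliques of $H$ correspond bijectively to elements of $\prod_\alpha Y_\alpha$ via $A \mapsto (A \cap C_\alpha)_\alpha$. Under this correspondence, for a finite anti-clique $p$ with support on components $\alpha_1, \ldots, \alpha_k$, the condition "$p \subseteq A$" defines a basic open subset of $\prod_\alpha Y_\alpha$, which is nonempty (e.g.\ take $A_{\alpha_j} = p \cap C_{\alpha_j}$ on the support and $\emptyset$ elsewhere).

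By the Hewitt--Marczewski--Pondiczery theorem, the product of at most $2^{\aleph_0}$ separable spaces is separable, so $\prod_\alpha Y_\alpha$ has a countable dense subset $\{A_n : n \in \omega\}$. Density ensures that for every finite anti-clique $p$ there is some $A_n$ with $p \subseteq A_n$, and the reformulation produces the desired countable centred cover of $P(H)$. The main obstacle is simply the packaging: the reformulation in terms of anti-cliques of $H$ and the verification that the product topology on $\prod_\alpha Y_\alpha$ makes "$p \subseteq A$" into an open condition; the real combinatorial content is absorbed into the classical separability theorem.
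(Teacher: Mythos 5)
Your proof is correct and follows essentially the same route as the paper: decompose $H$ into connected components, form a product of separable spaces indexed by the (at most continuum many) components, apply the Hewitt--Marczewski--Pondiczery theorem, and read off the centred pieces from a countable dense set. The only difference is cosmetic --- you take each factor to be the space of all anti-cliques of a component inside $2^{C_\alpha}$ and add an explicit reformulation lemma, while the paper uses the countable discrete space of finite anti-cliques of each component and defines the pieces directly from the dense points.
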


\begin{proof}
Let $H=\bigcup_{\lambda<\mathfrak{c}} H_{\lambda}$ where each $H_{\lambda}$ is a connected component of $H$.  Equip it with the discrete topology and consider the topological space $X=\Pi_{\lambda<\mathfrak{c}} P(H_{\lambda})$ equipped with the usual product topology. Every $P(H_{\lambda})$ is countable thus in particular separable. By the Hewitt-Marczewski-Pondiczery theorem $X$ is also separable. Take $D\subset X$ be a countable dense subset. For each $d_n\in D$, let $P_n=\{p:p\in P(H)$ and $p\cap H_{\lambda}=d_n(\lambda)\}$. Every $P_n$ is centred since $\bigcup_{\lambda<\mathfrak{c}} d_n(\lambda)$ is an anti-clique in $H$. Also for every $p\in P(H)$, the subset $\{x:x(\lambda)=p\cap H_{\lambda}$ or $p\cap H_{\lambda}=\emptyset$ for all $\lambda\}$ is open, so there is a $d_n$ in it, and equivalently, $p\in P_n$. 
\end{proof}

Our hypergraphs $[T_n]$, $([T_{\infty}], H^0_{\infty})$ and $([T_{\infty}], H^1_{\infty})$ all satisfy the requirement of the above theorem since any two vertices in an edge are eventually equal, thus all posets we dealt with are $\sigma$-centred. 

For another interesting example that fails Borel $\sigma$-finite chain condition and the usual $\sigma$-bounded chain condition but satisfies $\sigma$-finite chain condition, see \cite{MX}.

\section{further observations}

When we look at the $\sigma$-bounded chain condition, a naturally aroused question is whether replacing ``bounded" with ``uniformly bounded" would result a new property that lie strictly in between $\sigma$-bounded chain condition and $\sigma$-linkedness or not. More precisely, we consider the following property:

\begin{Definition}
Let $n$ be a positive integer. A poset $P$ is said to satisfy the $\sigma$-$n$-chain condition if there is a countable partition $P=\bigcup P_i$ so that for every $i$, every antichain $A\subset P_i$ has size $<n$. When $P$ is a Borel poset and $P_i$ can be taken to be Borel simultaneously, we say that $P$ satisfies the Borel $\sigma$-$n$-chain condition.
\end{Definition}

However, the following (unpublished, as far as the author knows) theorem of Galvin and Hajnal states that this property is actually just $\sigma$-linkedness:

\begin{Theorem}[Galvin, Hajnal]\label{GH}
For any positive integer $n$, a poset $P$ satisfies $\sigma$-$n$-chain condition if and only if it is $\sigma$-linked.
\end{Theorem}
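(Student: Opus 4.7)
The plan is to tackle the nontrivial direction ($\sigma$-$n$-c.c. $\Rightarrow$ $\sigma$-linked) by induction on $n$, after first reducing to the following key statement: \emph{any poset all of whose antichains have size strictly less than $n$ is $\sigma$-linked}. The reduction is immediate, since a $\sigma$-$n$-c.c. witness partitions $P$ into pieces satisfying this hypothesis, and a countable union of $\sigma$-linked sets is $\sigma$-linked. The converse direction of the theorem is trivial, as any linked set has antichains of size at most one.

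I would induct on $n$, the base case $n = 2$ being trivial (antichains of size $<2$ means $Q$ itself is linked). For the inductive step, assume the lemma at $n$ and fix a poset $Q$ all of whose antichains have size $\leq n$. If $Q$ contains no antichain of size exactly $n$, the induction hypothesis applies directly. Otherwise, fix a maximum-size antichain $A = \{a_1, \ldots, a_n\}$ and partition $Q$ by the \emph{compatibility pattern} $S(x) = \{i : x \text{ is compatible with } a_i\} \subseteq \{1, \ldots, n\}$, which is nonempty for every $x \in Q$ by maximality of $A$.

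For each nonempty $T$ with $|T| < n$, the piece $Q_T = \{x : S(x) = T\}$ has antichains of size at most $|T| < n$, since any antichain $B \subseteq Q_T$ extends to an antichain $B \cup \{a_j : j \notin T\}$ in $Q$ (each $a_j$ with $j \notin T$ is incompatible with every $x \in Q_T$ by definition of $S$). The induction hypothesis then yields a $\sigma$-linked decomposition of $Q_T$.

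The remaining piece $Q_{\{1, \ldots, n\}}$—on which every element is compatible with every $a_i$—is the heart of the argument, and I expect it to be the main obstacle, since induction gives no traction here. I would show it is in fact already linked, via the following sub-lemma: the downset $D = \{z \in Q : z \leq a_1\}$ is itself linked. Indeed, any antichain $B \subseteq D$ is automatically incompatible with each $a_j$ for $j \neq 1$ (a common lower bound of $b \in B$ and $a_j$ would lie below both $a_1$ and $a_j$, contradicting that $A$ is an antichain), so $B \cup \{a_2, \ldots, a_n\}$ is an antichain in $Q$, forcing $|B| \leq 1$. Then for any $x, y \in Q_{\{1, \ldots, n\}}$, pick $u \leq x, a_1$ and $v \leq y, a_1$; the pair $u, v$ lies in the linked set $D$, so admits a common lower bound $w$, and $w \leq x, y$. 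Hence $Q_{\{1,\ldots,n\}}$ is linked, $Q$ is a finite union of $\sigma$-linked pieces, and the induction is complete.
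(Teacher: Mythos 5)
There is a genuine gap, and it comes from conflating compatibility computed in the ambient poset with compatibility computed inside the subset at hand. In the theorem, ``antichain'' and ``linked'' always refer to the ambient poset $P$: the pieces of a $\sigma$-$n$-c.c.\ witness are mere subsets, and common lower bounds may lie anywhere in $P$. If your key statement is read as a statement about standalone posets, the ``immediate'' reduction already fails: a piece $P_i$ of the witness, viewed as a poset in its own right, can have arbitrarily large antichains (take infinitely many pairwise $P$-compatible conditions all of whose common lower bounds lie outside $P_i$), so the lemma does not apply to it, and the same mismatch recurs when you invoke the induction hypothesis for the pieces $Q_T$. If instead you relativize everything to $P$ (the reading under which the reduction and the $Q_T$ step are fine), then the heart of the argument breaks: for $x,y\in Q_{\{1,\dots,n\}}$ the lower bounds $u\le x,a_1$ and $v\le y,a_1$ are elements of $P$ that need not belong to $Q$, hence need not lie in $D=\{z\in Q:z\le a_1\}$, so the linkedness of $D$ gives nothing. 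In fact the claim that $Q_{\{1,\dots,n\}}$ is linked is false: let $P=\{a_1,a_2,x,y,u_1,u_2,v_1,v_2\}$ where $u_i<x$, $u_i<a_i$, $v_i<y$, $v_i<a_i$ and no other strict relations, and let $Q=\{a_1,a_2,x,y\}$. Every $P$-antichain inside $Q$ has size at most $2$, $A=\{a_1,a_2\}$ is a maximum-size antichain, and $Q_{\{1,2\}}=\{x,y\}$ with $x$ and $y$ incompatible in $P$.

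The trouble is not just this one step: the single-piece relative lemma your reduction needs (``if every $P$-antichain inside $Q$ has size $<n$, then $Q$ is a countable union of $P$-linked sets'') is false. Every graph arises as the incompatibility graph of some $P$ restricted to a subset $Q$ (add one common lower bound for each non-edge), so for $n=3$ that lemma would assert that every triangle-free graph has countable chromatic number, which fails (e.g.\ Erd\H{o}s--Hajnal shift graphs). Hence no argument that treats the pieces one at a time, using only the antichain bound inside that piece, can succeed; the whole partition must be used at once. This is exactly what the paper's proof does: it takes a witness for the least possible $n$, and for $p$ in a bad piece $P_k$ it records, via the sets $R_i(p)=\{q\in P_k:\exists r\in P_i,\ r\le p,\ r\le q\}$, in \emph{which piece} a common extension lies; a counting argument producing an antichain of size $(n-1)^2>n$ inside a single piece then contradicts minimality of $n$. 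Your compatibility-pattern decomposition handles the pieces $Q_T$ with $T$ proper nicely, but the piece $Q_{\{1,\dots,n\}}$ is precisely where this partition-indexed information is indispensable, and your reduction has discarded it.
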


\begin{proof}

Suppose not. Let $P=\bigcup_{i<\omega} P_i$ be a partition witness $\sigma$-$n$-chain condition for the smallest $n$ possible. Note that if $n=2$, then $P$ is already $\sigma$-linked. For the following we assume that $n>2$. 

Since $P$ does not satisfy $\sigma$-$n-1$-chain condition, there must be $k<\omega$ so that for every partition of $P_k=\bigcup_{j<\omega} P_{k,j}$ has a fragment $P_{k,l}$ including an antichain of size no less than $n-1$ (and thus equals to $n-1$). For each $p\in P_k$, let $L(p)=\{q\in P_k$ so that $p$ and $q$ are incompatible$\}$ and $R_i(p)=\{q\in P_k$: there is a $r\in P_i$ extending both $p$ and $q\}$. Then for every $p$, $L(p)\cup(\bigcup_{i}R_i(p))=P_k$. Moreover, for each $p$ there is an integer $i(p)$ so that $R_{i(p)}(p)$ contains an antichain of size $n-1$. For each $i$, let $Q_i=\{p\in P_k: i(p)=i\}$ . Clearly $\bigcup_i Q_i=P_k$, thus there is an $l$ so that $Q_l$ contains an antichain $p_1,..., p_{n-1}$ of size $n-1$. Then for each $i=1,2,... n-1$ we can find antichain $q_{i1}, ..., q_{i(n-1)}\subset R_l$ of size $n-1.$ For each $i=1,..., n-1$ and $j=1,2,...,n-1$, we fix $r_{ij}$ in $P_l$ extending both $ p_i$ and $ q_{ij}.$ Then $\{r_{ij}: i, j=1,2,..., n-1\}$ is an antichain and it is a subset of $P_l.$ Since $(n-1)^2>n$, we have a contradiction with $n>2$.

\end{proof}

By the same proof, with a careful tracking of complexity of sets, we can show the same for a wide class of Borel posets:
\begin{Theorem}
Let $P$ be a Borel poset with Borel incompatibility. If there is an integer $k$ and a countable partition $P=\bigcup P_n$ into Borel subsets so that for every $n$, every antichain $A\subset P_n$ has size $<k$, then $P$ is Borel $\sigma$-linked.
\end{Theorem}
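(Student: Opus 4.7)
The plan is to adapt the proof of Theorem \ref{GH} verbatim, with two modifications that keep the construction inside Borel sets; the Borel incompatibility hypothesis is exactly what licenses this adaptation. I induct on $k$: the base case $k=2$ is immediate since a Borel $\sigma$-$2$-chain condition partition already witnesses Borel $\sigma$-linkedness, so the inductive step is to show that Borel $\sigma$-$k$-c.c.\ implies Borel $\sigma$-$(k-1)$-c.c.\ for $k \geq 3$.

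The first main step is to produce a Borel variant of the analytic sets $R_i(p)$ used classically. For each compatible pair $(p,q)$, let $\ell(p,q)$ be the least $i$ for which some $r \in P_i$ satisfies $r \leq p$ and $r \leq q$. I claim $\ell$ is a Borel function on the (Borel) set of compatible pairs. On the one hand, $\{\ell \leq i\}$ is the $\Sigma^1_1$ projection of the Borel relation ``$\exists r \in P_0 \cup \cdots \cup P_i$ with $r \leq p$ and $r \leq q$''. On the other hand, its complement in $P \times P$ is the union of the Borel set of incompatible pairs with $\{(p,q) : \exists r \in \bigcup_{j>i} P_j,\ r \leq p \text{ and } r \leq q\}$, which is $\Sigma^1_1$. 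So $\{\ell \leq i\}$ is simultaneously $\Sigma^1_1$ and $\Pi^1_1$, hence Borel by Suslin's theorem. Here the Borel incompatibility hypothesis is essential; without it the computation of the complement collapses.

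Setting $\tilde{R}_i(p) = \{q \in P_m : \ell(p,q) = i\}$ and $L(p) = \{q \in P_m : q \text{ incompatible with } p\}$, each of these sets is Borel and $\{L(p)\} \cup \{\tilde{R}_i(p)\}_{i<\omega}$ is a Borel partition of $P_m$. Suppose towards contradiction that $P$ is not Borel $\sigma$-$(k-1)$-c.c.; then some $P_m$ cannot be Borel-refined into pieces with antichains of size $<k-1$, so for every $p \in P_m$ the above Borel partition must contain a piece with an antichain of size $\geq k-1$. This piece cannot be $L(p)$, since such an antichain together with $p$ would be an antichain of size $k$ in $P_m$, exceeding the hypothesised bound. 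Define $i(p)$ to be the least $i$ for which $\tilde{R}_i(p)$ contains an antichain of size $k-1$, and $Q_l = \{p \in P_m : i(p) = l\}$. The crucial complexity check is that $Q_l$ is Borel: the predicate ``$\tilde{R}_j(p)$ contains an antichain of size $k-1$'' is a $\Sigma^1_1$ existential projection of a Borel relation on $k-1$ parameters, so $Q_l$ is the intersection of the $\Sigma^1_1$ statement for $j=l$ with the $\Pi^1_1$ conjunction over $j<l$ of its negations, and Suslin once more yields Borel.

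Since $\{Q_l\}_{l<\omega}$ is a Borel partition of $P_m$ and $P_m$ is not Borel $\sigma$-$(k-1)$-c.c., some $Q_l$ contains an antichain $p_1, \ldots, p_{k-1}$, and from here the combinatorial heart of Theorem \ref{GH} runs verbatim: each $\tilde{R}_l(p_i)$ supplies an antichain $q_{i,1}, \ldots, q_{i,k-1}$, the definition of $\ell$ furnishes $r_{i,j} \in P_l$ below both $p_i$ and $q_{i,j}$, and the $(k-1)^2 \geq k$ elements $r_{i,j}$ form an antichain in $P_l$, contradicting the hypothesised bound on $P_l$. The main obstacle throughout is really only the complexity bookkeeping for $\ell$ and $i(p)$; once Borel incompatibility delivers $\ell$ as a Borel function, the remaining Suslin-theorem invocations are mechanical.
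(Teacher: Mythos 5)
Your combinatorial skeleton is the same Galvin--Hajnal argument the paper uses, but the two complexity claims that are supposed to keep the construction Borel are both incorrect, and they are exactly the points where the real difficulty lies. First, your computation of the complement of $\{\ell\leq i\}$ fails: the set (incompatible pairs) $\cup\{(p,q):\exists r\in\bigcup_{j>i}P_j,\ r\leq p,\ r\leq q\}$ does cover everything outside $\{\ell\leq i\}$, but it is not the complement, because a compatible pair may have common extensions both in $P_0\cup\cdots\cup P_i$ and in $\bigcup_{j>i}P_j$; such a pair lies in both sets. So you have only shown that $\{\ell\leq i\}$ and another $\Sigma^1_1$ set cover $P\times P$, which gives no $\Pi^1_1$ bound on $\{\ell\leq i\}$ and no appeal to Suslin's theorem. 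Indeed there is no reason for $\ell$ to be Borel: Borel incompatibility makes ``$p,q$ have a common extension somewhere'' Borel, but ``$p,q$ have a common extension inside the particular Borel piece $P_i$'' is a genuinely $\Sigma^1_1$ condition, and that is precisely why the paper treats the sets $R_i(p)$ as analytic rather than Borel. Second, your argument that $Q_l$ is Borel misapplies Suslin's theorem: Suslin says that a single set which is simultaneously $\Sigma^1_1$ and $\Pi^1_1$ is Borel; it says nothing about the intersection of a $\Sigma^1_1$ set with a $\Pi^1_1$ set, which in general is neither analytic nor coanalytic, let alone Borel. With these two steps gone, your partitions $\{L(p)\}\cup\{\tilde R_i(p)\}_i$ and $\{Q_l\}_l$ are not known to be Borel, so the pivotal inference ``$P_m$ is not Borel $\sigma$-$(k-1)$-c.c., hence some piece of this partition contains an antichain of size $k-1$'' cannot be drawn, and the induction collapses.

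The fix is not to force Borelness of these sets but to tolerate their analyticity: the paper runs the same argument with $\Sigma^1_1$ pieces ($L(p)$ Borel, $R_i(p)$ and $Q_i$ analytic, and ``$R_i(p)$ contains an antichain of size $n-1$'' analytic in $(p,i)$) and then uses the first reflection theorem, observing that when incompatibility is Borel the properties ``$A$ is linked'' and ``every antichain in $A$ has size $<m$'' are $\Pi^1_1$ on $\Sigma^1_1$, so each analytic piece can be enlarged to a Borel piece with the same antichain bound; this is where the Borel-incompatibility hypothesis is actually spent. Your instinct that the whole issue is complexity bookkeeping is right, but the bookkeeping requires reflection, not Suslin, and as written your proof has a genuine gap at both places where you invoke Suslin.
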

(In some articles, a Borel poset with Borel incompatibility is also called a ``Souslin forcing")
\begin{proof}
	First look at the description of linkedness of a subset $A$: ``for every $x,y\in A$, $x$ and $y$ are compatible". When ``being compatible" is Borel, this condition is $\boldsymbol{\Pi^1_1}$ over $\boldsymbol{\Sigma^1_1}$, therefore reflection lemma implies that we can relax the Borel partition in the definition of $\sigma$-$n$-chain condition to $\boldsymbol{\Sigma^1_1}$ partition. 
	
	Let us now track the complexity of each set occured in the proof of the Theorem \ref{GH} and make sure that every step is still valid when we turn to $\boldsymbol{\Sigma^1_1}$ partitions: each $P_k$ is Borel. For each $p$, $L(p)$ is Borel, $R_i(p)$ is $\boldsymbol{\Sigma^1_1}$. By above argument, the proof still holds and the relation ``$R_i(p)$ contains and antichain of size $n-1$" is $\boldsymbol{\Sigma^1_1}$ over pairs $(p,i)$. For each $i$, $Q_i$ is the $i$'th fibre of this set, thus is still $\boldsymbol{\Sigma^1_1}$. The rest of the proof thus can be proceeded. 
	
\end{proof}

While Borel posets with non-Borel incompatibility do exist (for example, take two disjoint Polish spaces $X\cap Y=\emptyset$ and $U\subset X\times Y$ be a closed set with non-Borel projection on $Y$, regard $U$ as a partial order on $X\cup Y$ results such a poset), it is not known if the requirement of Borel incompatibility can be omitted. 

\begin{proof}
In the previous proof, every $L(p)$ is Borel, $R_i(p)$ and $Q_i$ are analytic. Also notice that ``every antichain has size $<n$" is $\boldsymbol{\Pi}^1_1$ over $\boldsymbol{\Sigma}^1_1$, so by reflection lemma the above proof works for Borel posets with Borel incompatibility. 
\end{proof}

Also, due to the $G_0$-dichotomy, the following fact is quickly followed:
\begin{Theorem}
Let $P$ be a Borel poset such that the collection $\mathcal{C}$ of centred subsets is Borel and there is a Borel function $f:\mathcal{C}\to P$ so that for every $C\in\mathcal{C}$, $f(C)\leq p$ for every $p\in C$. Then exactly one of following follows:
\begin{enumerate}
    \item $P$ is Borel $\sigma$-linked, or
    \item There is a $P'\subset P([T_2])$ failing Borel $\sigma$-linkedness and for which there is a Borel map $\phi: P'\to P$ that preserves incompatibility. 
\end{enumerate}
\end{Theorem}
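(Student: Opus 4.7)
The plan is to apply the $G_0$-dichotomy (Theorem \ref{KST}) to the incompatibility graph on $P$ and then use the Borel selector $f$ to lift any resulting continuous homomorphism up to the larger poset $P([T_2])$. First I would observe that the incompatibility relation on $P$ is Borel: $p,q\in P$ are compatible precisely when $\{p,q\}\in\mathcal{C}$, and $\mathcal{C}$ is Borel by hypothesis. Applying Theorem \ref{KST} to this Borel graph yields the standard dichotomy: either it has countable Borel chromatic number, in which case a Borel coloring is exactly a countable Borel partition of $P$ into sets that are independent in incompatibility, i.e., linked sets, giving alternative (1) directly; or there is a continuous $g:2^\omega\to P$ sending each $G_0$-edge to an incompatible pair.

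In the latter case, identify $2^\omega$ with $[T_2]$ and $G_0$ with $H_2$, so that $g$ sends $H_2$-edges to incompatible pairs. I would then define
\[P'=\{p\in P([T_2]):\{g(x):x\in p\}\in\mathcal{C}\},\qquad \phi(p)=f(\{g(x):x\in p\}).\]
Since $g$ is continuous and both $\mathcal{C}$ and $f$ are Borel, $P'$ is a Borel subset of $P([T_2])$ and $\phi:P'\to P$ is Borel. To check that $\phi$ preserves incompatibility, suppose $p,q\in P'$ are incompatible in $P([T_2])$; then $p\cup q$ is not an anti-clique in $[T_2]$, so it contains an $H_2$-edge $\{x,y\}$, necessarily with one vertex in $p$ and the other in $q$ since $p,q$ are themselves anti-cliques. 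By the defining property of $f$, $\phi(p)\leq g(x)$ and $\phi(q)\leq g(y)$, so any common lower bound of $\phi(p),\phi(q)$ would be a common lower bound of $g(x),g(y)$, contradicting the incompatibility of the latter.

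The final step is to confirm $P'$ itself fails Borel $\sigma$-linkedness. Every singleton $\{x\}\subset[T_2]$ is trivially centred, so the Borel subset $\{\{x\}:x\in[T_2]\}$ is contained in $P'$, and two such singletons are compatible in $P'$ exactly when $\{x,y\}$ is an $H_2$-anti-clique. A Borel $\sigma$-linked partition of $P'$ would therefore restrict to a countable Borel partition of $[T_2]$ into $H_2$-independent sets, contradicting Fact \ref{uncblechi}. Exclusivity of (1) and (2) is automatic: a Borel incompatibility-preserving map pulls Borel linked partitions of $P$ back to Borel linked partitions of $P'$.

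The main obstacle is really the lifting itself: the $G_0$-dichotomy only supplies a vertex-level homomorphism from $[T_2]$ into $P$, while $P([T_2])$ contains finite anti-cliques of all sizes, each requiring a Borel choice of a condition in $P$ that lies below every $g(x)$ for $x$ in that anti-clique. The hypothesis on $f$ is exactly the tool that makes this choice available and uniform; restricting the domain to $P'$ (rather than all of $P([T_2])$) is forced, because outside $P'$ the set $\{g(x):x\in p\}$ need not be centred and no common lower bound exists even non-Borelly.
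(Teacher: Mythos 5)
Your proposal is correct and follows essentially the same route as the paper: apply the $G_0$-dichotomy to the (Borel, since $\mathcal{C}$ is Borel) incompatibility graph on $P$, and in the non-$\sigma$-linked case use the homomorphism $g$ together with the selector $f$ to define $P'=\{p\in P([T_2]):\{g(x):x\in p\}\in\mathcal{C}\}$ and $\phi(p)=f(\{g(x):x\in p\})$, exactly as in the paper's proof. The only difference is that you spell out the verifications (Borelness of the graph, preservation of incompatibility via an $H_2$-edge split between $p$ and $q$, failure of Borel $\sigma$-linkedness of $P'$ via the singletons and Fact \ref{uncblechi}, and exclusivity) that the paper leaves implicit.
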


\begin{proof}
Let $(P,G)$ be the incompatibility graph over $P$. Then $P$ is Borel $\sigma$-linked if and only if the Borel chromatic number $\chi_B(P)$ is countable. 

When $P$ is not Borel $\sigma$-linked, there is a Borel map $\psi$ that embeds $G_0$($=[T_2]$) into $(P,G)$. Let $P'=\{p\in P([T_2]):\{\psi(x):x\in p\}\in \mathcal{C}\}$. Let $\phi(p)=f(\{\psi(x):x\in p\})$. This $P'$ and $\phi$ are then as required. 

\end{proof}

And similarly we can replace Borel $\sigma$-linkedness and $[T_2]$ with other Borel chain conditions and corresponding posets. We finish with conjecturing the following strengthening of this theorem:

\begin{Question}

Is it true that for every Borel poset $P$, exactly one of the following holds?
\begin{enumerate}
    \item $P$ is Borel $\sigma$-linked, or
    \item There is a Borel map $\phi:P([T_2])\to P$ preserving incompatibility.
\end{enumerate}

\end{Question}

\end{document}